\documentclass[oneside,english]{elsarticle}
\usepackage[T1]{fontenc}
\usepackage[latin9]{inputenc}
\usepackage{amsthm}
\usepackage{setspace}
\usepackage{amssymb}
\usepackage{graphicx}
\usepackage{babel}
\usepackage{verbatim}
\usepackage{amsmath,amsthm,amssymb,amsfonts,graphics, epsfig}
\oddsidemargin=0cm \thispagestyle{empty} \textwidth=16cm \textheight=20.25cm


\theoremstyle{plain}
\newtheorem{thm}{Theorem}
\newtheorem{lem}{Lemma}
\newtheorem{clry}{Corollary}

\theoremstyle{remark}

\theoremstyle{definition}
\newtheorem{defn}{Definition}
\numberwithin{equation}{section}
\numberwithin{figure}{section}

\begin{document}
\begin{frontmatter}
\title{Irreversible $k$-threshold and majority conversion processes \\
on complete multipartite graphs and graph products}

\author[label1]{Sarah Spence Adams}
\author[label1]{Zachary Brass}
\author[label1]{Connor Stokes}
\author[label2]{Denise Sakai Troxell}
\address[label1]{Franklin W. Olin College of Engineering, Needham, MA 02492}
\address[label2]{Corresponding Author, Babson College, Babson Park, MA 02457, troxell@babson.edu}

\begin{abstract}
In graph theoretical models of the spread of disease through populations, the spread of opinion through social networks, and the spread of faults through distributed computer networks, vertices are in two states, either black or white, and these states  are dynamically updated at discrete time steps according to the rules of the particular conversion process used in the model. This paper considers the irreversible $k$-threshold and majority conversion processes. In an irreversible $k$-threshold (resp., majority) conversion process, a vertex is permanently colored black in a certain time period if at least $k$ (resp., at least half) of its neighbors were black in the previous time period. A $k$-conversion set (resp., dynamic monopoly) is a set of vertices which, if initially colored black, will result in all vertices eventually being colored black under a $k$-threshold (resp., majority) conversion process.  We answer several open problems by presenting bounds and some exact values of the minimum number of vertices in $k$-conversion sets and dynamic monopolies of complete multipartite graphs, as well as of Cartesian and tensor products of two graphs. 
\end{abstract}

  \begin{keyword}
  $k$-conversion set, dynamic monopoly, dynamo, spread of disease, spread of opinion.
   \end{keyword}

\end{frontmatter}

\section{Introduction}
Graph theoretical models have been  used in the past decade to study the spreads of (a) disease through a population, (b) opinion through a social network, and (c) faults in a distributed computer network \cite{products:triangle, products:brief, products:dreyerroberts, products:tori, products:competition, products:np, products:consensus, products:majority}. In these models, each vertex represents a person/object that is either colored black (\emph{e.g.}, infected, holding a certain opinion, corrupted) or white (\emph{e.g.}, uninfected, not holding a certain opinion, not corrupted),  and each edge represents a relevant interaction between two people/objects.  At discrete time steps, the colors of the vertices are updated under some conversion process.  Many natural questions then arise.  For example, what is the minimum number of initially black vertices in a graph that will eventually cause all vertices to turn black? Or, given an initially black subset of vertices in a graph, how many discrete time steps will it take to turn all of its vertices black? These questions may have relevance in applications to the design of immunization and containment strategies to prevent the spread of a disease, opinion, or fault to all of the vertices in the graph.  While many other related questions can be asked, this paper focuses on the first question under two distinct conversion processes.

In an \emph{irreversible $k$-threshold conversion process}, if a white vertex has at least $k$ black neighbors at time $t-1$, it will be permanently colored black at time $t$. Irreversible $k$-threshold conversion processes have been used to model the spread of disease and opinion through social networks \cite{ products:brief, products:dreyerroberts, products:np}.  A \emph{$k$-conversion set} is a set of vertices that, when colored black at time $t=0$, will eventually cause the graph to turn completely black under an irreversible $k$-threshold conversion process.  In this paper, we consider the fundamental problem of determining exact values or upper bounds for the minimum number of vertices in $k$-conversion sets of a graph $G$. This number is denoted by $min_k(G)$. A $k$-conversion set with exactly $min_k(G)$ vertices will be called \emph{minimum $k$-conversion set}. Previously, exact values of $min_k(G)$ for all possible $k$ have been found when $G$ is a path, cycle, complete multipartite graph, or tree \cite{products:dreyerroberts}. In the same article, some exact values and upper bounds of $min_k(G)$ for certain values of $k$ have also been determined when $G$ is a rectangular, cylindrical, or toroidal square grid.

In an \emph{irreversible majority conversion process}, if at least half of the neighbors of a white vertex are black at time $t-1$, the vertex will be permanently colored black at time $t$. Although irreversible majority conversion processes may be used to model the spread of opinion through a social network, these processes are more often used to model the propagation of computer faults \cite{products:triangle, products:constant, products:monotone, products:tori, products:consensus, products:majority, products:voting, products:bounds}.  For example, in a distributed computer network, nodes communicate with each other to maintain uncorrupted copies of the same variables. These nodes may determine the state of a variable based on a majority vote of their neighbors' states of the same variable. A set of vertices that, when colored black at time $t=0$, will eventually cause the entire graph to be colored black under an irreversible majority conversion process is called a \emph{dynamic monopoly}, or \emph{dynamo} \cite{products:triangle, products:constant, products:lyuu, products:monotone, products:tori, products:bounds}. We are concerned with such sets of minimum number of vertices, called \emph{minimum dynamos}, and we let $min_D(G)$ denote the number of vertices in a minimum dynamo of a graph $G$.  Exact values of $min_D(G)$ have been found when $G$ presents certain toroidal square grid structures \cite{products:tori}, while upper bounds of $min_D(G)$ have been found when $G$ is a planar, cylindrical, or toroidal triangular grid \cite{products:triangle}.

As both of the conversion processes considered in this paper are irreversible, that is, once a white vertex is colored black it will remain permanently black, we will omit the word \emph{irreversible} and we will often use \emph{color(ed)} (resp., \emph{uncolor(ed)}) to mean \emph{color(ed) black} (resp., \emph{color(ed) white}) for brevity.  Let $S$ (resp., $D$) be a $k$-conversion set (resp., dynamo) of a graph $G$ with set of vertices $V(G)$. We will sometimes simply say \emph{initially color} $S$ (resp., $D$) to mean that all the vertices in $S$ (resp., $D$) are initially colored black at time $t=0$ and all the vertices in $V(G)-S$ (resp., $V(G)-D$) are white.  We will also say that $S$ (resp., $D$) \emph{colors} $G$ or, equivalently, that $G$ \emph{is colored by} $S$ (resp., $D$) in $T$ time steps, if when $S$ (resp., $D$) is initially colored, all the vertices in $V(G)$ become colored by time step $T$ under a $k$-threshold (resp., majority) conversion process.

In Section~\ref{m}, we determine exact values of $min_k(G)$ and $min_D(G)$ when $G$ is a complete multipartite graph, building on prior work by Dreyer and Roberts \cite{products:dreyerroberts}.  In Section~\ref{cart}, we provide upper bounds of the same graph invariants when $G$ is a Cartesian products of two graphs.  Section \ref{tensor} contains analogous results for the tensor products of two graphs, and in this case, the upper bounds are shown to be tight.  Conclusions and a discussion of future work are included in Section \ref{conclude}.

\section{Exact values of $min_k(G)$ and $min_D(G)$ of complete multipartite graphs $G$}\label{m}
For $m\geq 2$, let us denote by $K_{p_1,p_2, \ldots, p_m}$ the \emph{complete multipartite graph} with $m$ partite sets with $p_1, p_2, \ldots, p_m$ vertices, respectively.  Such graphs may be useful in modeling computer networks wherein most pairs of computers are connected, however certain subsets of computers share no connections. Dreyer and Roberts~\cite{products:dreyerroberts} provided exact values of $min_k(K_{p_1,p_2, \ldots, p_m})$ through a series of related results.   Here, we provide an equivalent unified result with a straight-forward statement and a simpler proof. For the remainder of this paper, we will use the standard notation $|X|$ for the cardinality of a set $X$.

\begin{thm}
Let $G=K_{p_1,p_2,...p_m}$ be a complete multipartite graph  with $p_1+p_2+\ldots+p_m=n $ and $p_1\geq p_2 \geq \ldots \geq p_m$. Let $X$ be the set of all vertices with degree less than $k$.
Then
\[ min_k(G)=\left\{\begin{array}{ll}
max\{|X|, k\} &\textrm{if } n> k\\
n &\textrm{if }n\leq k.\\
\end{array}
\right.\]
\label{thm:kmpartite}
\end{thm}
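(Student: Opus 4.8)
The plan is to bound $min_k(G)$ from below and above by the same quantity. The starting point is the observation that a vertex in the part of size $p_i$ has degree exactly $n-p_i$, so $X$ is precisely the union of the parts with $p_i > n-k$; in particular, when $n\le k$ every vertex has degree at most $n-1<k$, forcing $X=V(G)$. For the lower bound I would argue in two independent ways. First, a vertex of degree less than $k$ can never acquire $k$ black neighbors, so it can only become black by being in the initial set; hence every $k$-conversion set contains $X$ and $min_k(G)\ge |X|$. Second, if $n>k$ and the conversion set $S$ is not all of $V(G)$, then consider the first time step at which a conversion occurs: the vertex colored there must already have $k$ black neighbors, all of them in $S$, so $|S|\ge k$. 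Combining gives $min_k(G)\ge\max\{|X|,k\}$ when $n>k$, and $min_k(G)\ge|X|=n$ when $n\le k$; the latter matches the trivial upper bound of coloring all of $V(G)$.

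For the upper bound I would track the number $b$ of black vertices and the number $b_j$ of black vertices in part $j$, and use the fact that a white vertex of part $j$ becomes black in the next step exactly when $b-b_j\ge k$. The immediate consequence I rely on is that once $b\ge k$, every part that is entirely white is colored in a single step. When $|X|\ge k$ this finishes things at once: coloring exactly $X$ gives $b=|X|\ge k$, and since every remaining part is entirely white it is colored at time $1$, so $X$ is a $k$-conversion set of size $\max\{|X|,k\}=|X|$.

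The substantive case is $n>k$ with $|X|<k$, where I must exhibit a conversion set of size exactly $k$. Here I would color all of $X$ and then add $k-|X|$ further vertices by filling non-$X$ parts completely, one at a time starting from the largest, so that at most one part $j^*$ is left partially colored. The key verification is that the smallest non-$X$ part is left entirely white: the number of vertices added outside $X$ is $k-|X|\le(n-|X|)-p_m$, which holds because $p_m\le n-k$ for a non-$X$ part. Then at time $1$, since $b=k$, all entirely-white parts turn black, leaving white vertices only in $j^*$; and at time $2$ the white vertices of $j^*$ see all $n-p_{j^*}\ge k$ vertices outside their part, now black, so $j^*$ is colored and $G$ is entirely black.

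I expect the main obstacle to be precisely the construction in the case $|X|<k$: a careless placement of the extra $k-|X|$ vertices can reach a premature fixed point --- for instance, splitting two seed vertices across the two parts of $K_{2,2}$ with $k=2$ stalls immediately --- so the crux is to keep the black vertices packed inside complete parts and to confirm, via the inequality $p_m\le n-k$, that at least one part remains entirely white to push $b$ up to $k$ and trigger the cascade. The remaining bookkeeping, namely that exactly one part is left partial and that $j^*$ has at least $k$ external neighbors, is then routine.
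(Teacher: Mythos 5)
Your proposal is correct and follows essentially the same route as the paper's proof: the same lower bound via $X$ being forced into every $k$-conversion set, the same case split on $n\le k$, $|X|\ge k$, and $n>k>|X|$, and in the last case the same construction of coloring $X$ together with whole non-$X$ parts plus at most one proper partial part, finished by the same two-step cascade. Your additional touches --- spelling out the ``obvious'' bound $min_k(G)\ge k$ via a first-conversion argument, and explicitly guaranteeing (via $p_m\le n-k$) that the smallest non-$X$ part stays entirely white --- are valid refinements of details the paper treats implicitly, not a different method.
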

\begin{proof}
For each $i=1,2,...,m$, let $V_i$ be the partite set of $G$ with $p_i$ vertices. Note that a vertex belongs to $X$ if and only if its entire partite set belongs to $X$. Therefore $X$ is the union of some partite sets in $G$. Since all vertices in $X$ have degree less than $k$, $X$ must be contained in any $k$-conversion set, and therefore $|X| \leq min_k(G)$. If $n\leq k$, then $X=V(G)$ and therefore $min_k(G)=n$. If $k\leq |X|$, then $X$ is a minimum $k$-conversion set since $X$ is a union of whole partite sets and therefore all the vertices in $V(G) - X$ are adjacent to the vertices in $X$ (of which there are $\geq k$), so they would become colored in time step $t=1$ if $X$ is initially colored; hence $min_k(G)=|X|$.

Finally, we examine the case where $n>k>|X|$. To show that $min_k(G)= k$ under these conditions, it suffices to exhibit a $k$-conversion set $S$ with $k$ vertices since the inequality $min_k(G)\geq k$ is obvious. Color all vertices in $X$, and in addition, color a subset of $V(G)-X$ with $k-|X|$ vertices, where this subset is chosen to contain a union of entire partite sets, along with a possibly empty proper subset of some partite set $V_j$. This set $S$ of $k$ initially black vertices is non-empty because $n-|X|>k-|X|>0$. Moreover, $S$ is a union of partite sets, except possibly for the proper subset of $V_j$ because $X$ is the union of partite sets. Every vertex in $V(G)$, except possibly for the vertices in $V_j$, will be colored by time $t=1$ since every white vertex not in $V_j$ is connected to the $|X|$ vertices in $X$ and the $k-|X|$ vertices in $S-X$, giving a total of $k$ vertices. The only possible vertices that could still be white at time $t=1$ would be in $V_j$, but these would be colored at time $t=2$. This is because the vertices in $V_j$ are connected to every vertex outside $V_j$, which are all black by time $t=1$, and since $V_j$ is not part of $X$, its vertices have degree $\geq k$.  Thus, $S$ is a $k$-conversion set of size $k$ as desired.
\end{proof}

We now extend this previous result to a majority conversion process and find $min_D(K_{p_1,p_2, \ldots, p_m})$.

\begin{thm}
 Let $G=K_{p_1,p_2, \ldots, p_m}$ be a complete multipartite graph with $p_1+p_2+\cdots+p_m=n$ and $p_1\geq p_2\geq  \ldots \geq p_m$.  Then,  $min_D(G)=\left\lceil\dfrac{n-p_1}{2}\right\rceil$.
\label{thm:mpartite}
\end{thm}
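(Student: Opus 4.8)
The plan is to prove the two inequalities $min_D(G)\ge \lceil (n-p_1)/2\rceil$ and $min_D(G)\le \lceil (n-p_1)/2\rceil$ separately, after one structural observation. Since every vertex of the partite set $V_i$ has degree $n-p_i$, under the majority process it needs at least $\lceil (n-p_i)/2\rceil$ black neighbors in order to be colored. Because $p_1\ge p_2\ge\cdots\ge p_m$, the quantity $\lceil (n-p_i)/2\rceil$ is \emph{smallest} when $p_i=p_1$; thus the minimum threshold over all vertices of $G$ equals $\tau:=\lceil (n-p_1)/2\rceil$, and it is attained exactly by the vertices of $V_1$. I will also write $R:=V_2\cup\cdots\cup V_m$, so that $|R|=n-p_1$ and every vertex of $V_1$ has all of its neighbors inside $R$.

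The lower bound is then the direct analogue of the ``obvious'' inequality $min_k(G)\ge k$ used in Theorem~\ref{thm:kmpartite}. I would argue that any $D$ with $|D|<\tau$ cannot be a dynamo because it is already frozen. Indeed, at time $t=1$ each white vertex $v$ has at most $|D|<\tau$ black neighbors, while its threshold is at least $\tau$; hence no vertex is colored at $t=1$, so the configuration never changes and $D\ne V(G)$ survives as white. Note that this disposes of every placement of $D$ at once, including seeds wastefully placed in $V_1$, so no case analysis on where $D$ sits is needed. This yields $min_D(G)\ge\tau$.

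For the upper bound I would exhibit a dynamo of size $\tau$. Choose $D\subseteq R$ with $|D|=\tau$; this is possible since $|R|=n-p_1\ge\lceil (n-p_1)/2\rceil=\tau$. Every $v\in V_1$ then sees exactly $\tau\ge (n-p_1)/2$ black neighbors at $t=1$, so all of $V_1$ is colored by time $t=1$. For the second round, take a still-white vertex $u\in V_i$ with $i\ge 2$: its black neighbors at $t=1$ include all $p_1$ vertices of $V_1$ together with the vertices of $D$ lying outside $V_i$, of which there are at least $\tau-p_i$ (worst case $|D\cap V_i|=p_i$), and these two groups are disjoint because $D\subseteq R$. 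It therefore suffices to check $p_1+\tau-p_i\ge\lceil (n-p_i)/2\rceil$, which after substituting $\tau=\lceil (n-p_1)/2\rceil$ reduces to the inequality $\lfloor (n-p_1)/2\rfloor\le\lfloor (n-p_i)/2\rfloor$; this holds because $p_i\le p_1$. Consequently every remaining vertex is colored by $t=2$, so $D$ colors $G$ and $min_D(G)\le\tau$.

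The step I expect to be the main obstacle is precisely this second round of the upper bound: one must guarantee that $V_1$ alone, reinforced only by the part of $D$ avoiding $V_i$, already meets the majority threshold of $u$ no matter how the $\tau$ seeds are distributed among $V_2,\ldots,V_m$. Reducing that requirement to the clean floor inequality $\lfloor (n-p_1)/2\rfloor\le\lfloor (n-p_i)/2\rfloor$ is the crux, whereas the lower bound and the first coloring round are essentially immediate from the observation that $\tau$ is the minimum majority threshold in $G$.
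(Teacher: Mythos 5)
Your proof is correct and takes essentially the same approach as the paper: the identical lower-bound argument (any set smaller than the minimum threshold $\left\lceil (n-p_1)/2\right\rceil$ is frozen), the identical construction of a seed set of that size inside $V_2\cup\cdots\cup V_m$, coloring $V_1$ at $t=1$ and every remaining $V_i$ at $t=2$ via the same counting bound $p_1+\tau-p_i\geq\left\lceil (n-p_i)/2\right\rceil$, which the paper verifies with ceilings and you with an equivalent floor inequality.
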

\begin{proof}
Note that the vertices in the partite set $V_i$ with $p_i$ vertices have degree $n-p_i \geq n-p_1$. So any dynamo of $G$ must have at least $\left\lceil\dfrac{n-p_1}{2}\right\rceil$ vertices, and therefore $min_D(G)\geq\left\lceil\dfrac{n-p_1}{2}\right\rceil$. To show that $min_D(G) = \left\lceil\dfrac{n-p_1}{2}\right\rceil$, it suffices to exhibit a dynamo with $\left\lceil\dfrac{n-p_1}{2}\right\rceil$ vertices. To do this, initially color a subset $D$ of $V(G)-V_1$ with $\left\lceil\dfrac{n-p_1}{2}\right\rceil$ vertices. Since the vertices in $V_1$ have $n-p_1$ neighbors, all vertices in $V_1$ will become colored at time $t=1$. Therefore, at least $p_1+\left\lceil\dfrac{n-p_1}{2}\right\rceil=\left\lceil\dfrac{n+p_1}{2}\right\rceil$ vertices will be colored by time $t=1$. At this point, since any partite set $V_i$ for $i=2,3,\ldots ,m$ has at most $p_i$ black vertices, any vertex in any $V_i$ is adjacent to at least $\left\lceil\dfrac{n+p_1}{2}\right\rceil - p_i$ black vertices; since $\left\lceil\dfrac{n-p_i}{2}\right\rceil$ vertices are required to color any vertex in $V_i$ (because a vertex in $V_i$ has $n-p_i$ neighbors), all uncolored vertices will become colored at time $t=2$ as $\left\lceil\dfrac{n+p_1}{2}\right\rceil - p_i =\left\lceil\dfrac{n+p_1-2p_i}{2}\right\rceil\geq \left\lceil\dfrac{n-p_i}{2}\right\rceil$. Hence, $D$ is a dynamo with $\left\lceil\dfrac{n-p_1}{2}\right\rceil$ vertices as desired.
\end{proof}

\section{Upper bounds of $min_k(G\square H)$ and $min_D(G\square H)$  of the Cartesian product $G\square H$ }\label{cart}
In this section, we will provide general upper bounds of $min_k(G\square H)$ and $min_D(G\square H)$ where $G\square H$ denotes the Cartesian product of two graphs $G$ and $H$. Let us first recall the definition of $G\square H$ and some related concepts.
\begin{defn}
\label{defn:Cartesian}
The \emph{Cartesian product} of two disjoint graphs $G$ and $H$ is the graph $G\square H$ such that $V(G\square H)=V(G)\times V(H)$ and two vertices $(u,u')$ and $(v,v')$ are adjacent if and only if either $u=v$ and $u'$ is adjacent to $v'$ in $H$, or $u'=v'$ and $u$ is adjacent to $v$ in $G$. For a fixed vertex $v$ in $H$, let $G_{v}$ be the subgraph of $G\square H$ induced by the vertices $(u,v)$ for every vertex $u$ in $G$; note that $G_{v}$ is isomorphic to $G$. Similarly, for a fixed vertex $u$ in $G$, let $H_{u}$ be the subgraph of $G\square H$ induced by the vertices $(u,v)$ for every vertex $v$ in $H$; note that $H_{u}$ is isomorphic to $H$.
\end{defn}

We first provide an upper bound of $min_k(G\square H)$.

\begin{thm}
Let $G$ and $H$ be two graphs. Then, $min_k(G \square H)\leq min_k(G)min_k(H)$.
\label{thm:cart_k-conv}
\end{thm}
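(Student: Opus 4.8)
The plan is to exhibit a $k$-conversion set of $G \square H$ that is built as a Cartesian product of minimum $k$-conversion sets of the two factors. Let $S_G$ be a minimum $k$-conversion set of $G$ and $S_H$ a minimum $k$-conversion set of $H$, and put $S = S_G \times S_H \subseteq V(G \square H)$, so that $|S| = min_k(G)\,min_k(H)$. It then suffices to show that initially coloring $S$ eventually colors all of $G \square H$, since that immediately gives $min_k(G \square H) \leq |S|$.

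First I would record the monotonicity principle underlying every threshold process: if a set $B$ of vertices is black at some time step and $B' \supseteq B$, then running the process from $B'$ keeps at least as many vertices black at every later step, because a white vertex needs only $k$ black neighbors to convert and extra black vertices never undo a coloring. The consequence I actually use is a containment statement: if we artificially ignore some edges (pretend certain neighbors are permanently white), the resulting \emph{restricted} process is dominated by the true process in the sense that, by induction on the time step $t$, anything the restricted process colors the full process colors no later. This is the technical engine of the argument and the one point that deserves care, though it is routine once stated.

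With this in hand I would run the conversion in two phases exploiting the product structure from Definition~\ref{defn:Cartesian}. In phase one, for each $v \in S_H$ I look at the copy $G_v$, which is isomorphic to $G$, and restrict attention to its internal $G$-direction edges. The initially black vertices lying inside $G_v$ are exactly $S_G \times \{v\}$, a faithful copy of the $k$-conversion set $S_G$; hence the restricted process on $G_v$ alone colors all of $G_v$. By the containment principle the true process colors all of $G_v$ as well, and applying this simultaneously for every $v \in S_H$ shows that the whole set $V(G) \times S_H$ becomes black.

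Phase two repeats the idea in the other factor. For each $u \in V(G)$ I consider the copy $H_u \cong H$ and restrict to its internal $H$-direction edges. After phase one the vertices $\{u\} \times S_H$ are black, and this is a copy of the $k$-conversion set $S_H$, so the restricted process colors all of $H_u$; containment again promotes this to the full process. Since this holds for every $u \in V(G)$, every vertex of $G \square H$ is eventually colored, so $S$ is a $k$-conversion set and $min_k(G \square H) \leq |S| = min_k(G)\,min_k(H)$. I expect the monotonicity/containment lemma to be the only delicate step; everything after it is bookkeeping on the grid of copies $G_v$ and $H_u$.
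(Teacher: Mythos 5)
Your proposal is correct and follows essentially the same route as the paper: initially color $S = S_G \times S_H$, propagate first within the copies of one factor indexed by the other factor's conversion set, then within all copies of the second factor (the paper does the two phases in the opposite order, which is immaterial by symmetry). The only difference is that you make explicit the monotonicity/containment principle for restricted processes, which the paper's proof uses implicitly when it asserts that the coloration of a factor "induces" the coloration of its copy inside the product.
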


\begin{proof}
Let $S_G$ and $S_H$ be minimum $k$-conversion sets of $G$ and $H$, respectively. Let $S$ be the set of $min_k(G)min_k(H)$ vertices $(u,v)$ in $G \square H$ where  $u\in{S_G}$ and $v\in{S_H}$. To verify the proposed upper bound, it suffices to show that $S$ is a $k$-conversion set of $G\square H$.

Initially color $S$. For a fixed vertex $u$ in $S_G$, $H_u$ contains vertices $(u,v)$ for all $v$ in $S_H$. Since these vertices are initially colored and $S_H$ is a $k$-conversion set of $H$, then all vertices in $H_u$ will eventually be colored as $H_u$ is isomorphic to $H$ and the coloration of $H$ by $S_H$ induces the coloration of $H_u$ by the set of vertices $(u,v)$ for $v$ in $S_H$.

To conclude the proof, it suffices to show that the union of all $H_u$ for $u$ in $S_G$ is also a $k$-conversion set of $G\square H$. Initially color the vertices in this union. For each $v$ in $H$, $G_v$ contains vertices $(u,v)$ for all $u$ in $S_G$. Since all the vertices in $H_u$ for $u$ in $S_G$ are colored and since $S_G$ is a $k$-conversion set of $G$, then all the vertices in $G_v$ will eventually get colored because $G_v$ is isomorphic to $G$ and the coloration of $G$ by $S_G$ induces the coloration of $G_v$ by the set of vertices $(u,v)$ for $u$ in $S_G$. Therefore, we showed that all vertices in $G\square H$ will get colored and consequently $S$ is a $k$-conversion set of $G\square H$ as desired.
\end{proof}

We will now provide the result analogous to Theorem~\ref{thm:cart_k-conv} under a majority conversion process. In what follows, given a graph $G$ and a non-empty subset $X$ of $V(G)$, we denote by $deg_G(u)$ the degree of a vertex $u$ in $G$, and define $deg_X(u)$ as the number of  neighbors of $u$ in $X$.

\begin{thm}
Let $G$ and $H$ be two graphs. Then,
\[ min_{D}(G \square H) \leq min_{D}(G)|V(H)| + min_{D}(H)|V(G)| - min_{D}(G)min_{D}(H). \]
\label{thm:Cartesian}
\end{thm}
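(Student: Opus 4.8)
The plan is to exhibit an explicit dynamo $D$ of $G\square H$ whose size equals the right-hand side, built from minimum dynamos of the two factors. Let $D_G$ and $D_H$ be minimum dynamos of $G$ and $H$, respectively, and set
\[ D=(D_G\times V(H))\cup(V(G)\times D_H). \]
By inclusion--exclusion, $|D|=min_D(G)|V(H)|+min_D(H)|V(G)|-min_D(G)min_D(H)$, which matches the claimed bound, so the whole task reduces to proving that $D$ colors $G\square H$ under the majority process.

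The naive strategy behind Theorem~\ref{thm:cart_k-conv} --- color a product set and propagate copy by copy --- will not transfer, because the majority threshold scales with degree: a vertex $(u,v)$ has $deg_G(u)+deg_H(v)$ neighbors in $G\square H$, strictly more than it has inside the copy $H_u$ or the copy $G_v$, so having half of its neighbors within a single copy colored need not reach half of \emph{all} its neighbors. The device I would use to repair this is the elementary inequality $\lceil a/2\rceil+\lceil b/2\rceil\ge\lceil(a+b)/2\rceil$: if at least half of the $G$-direction neighbors of $(u,v)$ (those lying in $G_v$) and at least half of its $H$-direction neighbors (those lying in $H_u$) are colored, then $(u,v)$ has met its majority threshold in $G\square H$ and will be colored. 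This is exactly where the extra vertices in $D$ beyond a single product set earn their keep.

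To organize the propagation I would import the timing of the two factor processes. Run the majority process on $G$ from $D_G$ alone and let $g(u)$ be the time step at which $u$ becomes colored, with $g(u)=0$ for $u\in D_G$; since $D_G$ is a dynamo, $g(u)$ is finite for every $u$, and analogously define $h(v)$ for $H$ from $D_H$. The central claim is that, when $D$ is initially colored in $G\square H$, every vertex $(u,v)$ is colored by time $g(u)+h(v)$, which I would prove by induction on $g(u)+h(v)$. In the base case either $g(u)=0$ or $h(v)=0$, which places $(u,v)$ in $D_G\times V(H)$ or in $V(G)\times D_H$, hence in $D$. For the inductive step with $g(u),h(v)\ge 1$, the definition of $g$ furnishes a set $A$ of $G$-neighbors $u'$ of $u$ with $g(u')\le g(u)-1$ and $|A|\ge\lceil deg_G(u)/2\rceil$; the induction hypothesis then colors each $(u',v)$ by time $g(u')+h(v)\le g(u)+h(v)-1$. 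Symmetrically a set $B$ of $H$-neighbors $v'$ of $v$ with $|B|\ge\lceil deg_H(v)/2\rceil$ yields vertices $(u,v')$ all colored by that same time. The families $A\times\{v\}$ and $\{u\}\times B$ are disjoint sets of neighbors of $(u,v)$, so the displayed inequality certifies that $(u,v)$ reaches its threshold at time $g(u)+h(v)$.

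I expect the main obstacle to be precisely this bookkeeping of when each neighbor turns black: the two induction parameters must be combined additively rather than as a maximum (reducing only $g$ does not reduce a maximum dominated by $h$), and one must verify both that the two contributing neighbor families are disjoint and that $\lceil deg_G(u)/2\rceil+\lceil deg_H(v)/2\rceil\ge\lceil(deg_G(u)+deg_H(v))/2\rceil$ suffices to certify majority in the product. Once the inductive claim is established, finiteness of all the quantities $g(u)+h(v)$ shows that $D$ colors $G\square H$, and the cardinality computation above then completes the proof.
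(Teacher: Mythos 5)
Your proposal is correct and takes essentially the same approach as the paper: the same dynamo $D=(D_G\times V(H))\cup(V(G)\times D_H)$, the same splitting of the neighborhood of $(u,v)$ into its $G$-direction and $H$-direction parts via $deg_{G\square H}(u,v)=deg_G(u)+deg_H(v)$, and the same observation that a majority in each part yields a majority of the whole. The only difference is bookkeeping: the paper inducts on the $G$-coloring time alone, partitioning $V(G\square H)$ into slabs $H(t)$ and arguing each copy $H_u$ is colored ``at least as fast as'' $H$ is by $D_H$, whereas your single induction on $g(u)+h(v)$ makes that timing claim explicit and rigorous --- a cleaner write-up of the same idea.
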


\begin{proof}
Let $D_{G}$ and $D_{H}$ be two minimum dynamos of $G$ and $H$, respectively. Let $D$ be the set of vertices $(u,v)$ in $G\square H$ where either $u$ is in $D_{G}$ or $v$ is in $D_H$, or both. (Refer to Fig. \ref{fig:Cartesianproduct} for a concrete example).  Note that $D$ has $min_{D}(G)\left|V(H)\right| + min_{D}(H)\left|V(G)\right| - min_{D}(G)min_{D}(H)$ vertices:  $\left|V(H)\right|$ vertices $(u,v)$ for each $u$  in $D_{G}$, $\left|V(G)\right|$ vertices $(u,v)$ for each $v$  in $D_{H}$, and we must subtract $min_{D}(G)min_{D}(H)$ to account for double-counting the vertices $(u,v)$ where $u$ and $v$ are in $D_{G}$ and $D_{H}$, respectively. Furthermore, $min_D(G) {\left|V(H)\right|}+min_D(H) {\left|V(G)\right|}-min_D(G)min_D(H)$ $\geq$ $min_D(G)min_D(H)+min_D(H)min_D(G)-min_D(G)min_D(H)=min_D(G)min_D(H)> 0$, and therefore, $D\neq\emptyset$. To verify the proposed bound, it suffices to show that $D$ is a dynamo of $G\square H$.

First, we need to partition the vertices of $G\square H$ based on the coloration of $G$ by $D_G$ as follows. Suppose it takes $T$ time steps to fully color $G$ by $D_G$ and for each $t=0, 1, \ldots, T$, let $H(t)$ be the subgraph of $G\square H$ induced by the vertices $(u,v)$ where $u$ is a vertex that becomes colored at time step $t$ in $G$. Note that each $H(t)$ is the disjoint union of subgraphs $H_{u}$ (recall Definition~\ref{defn:Cartesian}) for $u$ in $H(t)$, and $H(0), H(1), \ldots, H(T)$ partitions the vertices in $G\square H$. (Refer to Fig.~\ref{fig:Gsteps} for the coloration of the individual graphs $G$, $H$, and $G\square H$ in Fig.~\ref{fig:Cartesianproduct}, and to Fig.~\ref{fig:Cartesian3} for the corresponding subgraphs $H(t)$ for $t=0, 1, \ldots, T$.)

\begin{figure}
\begin{center}
\includegraphics[width=120mm]{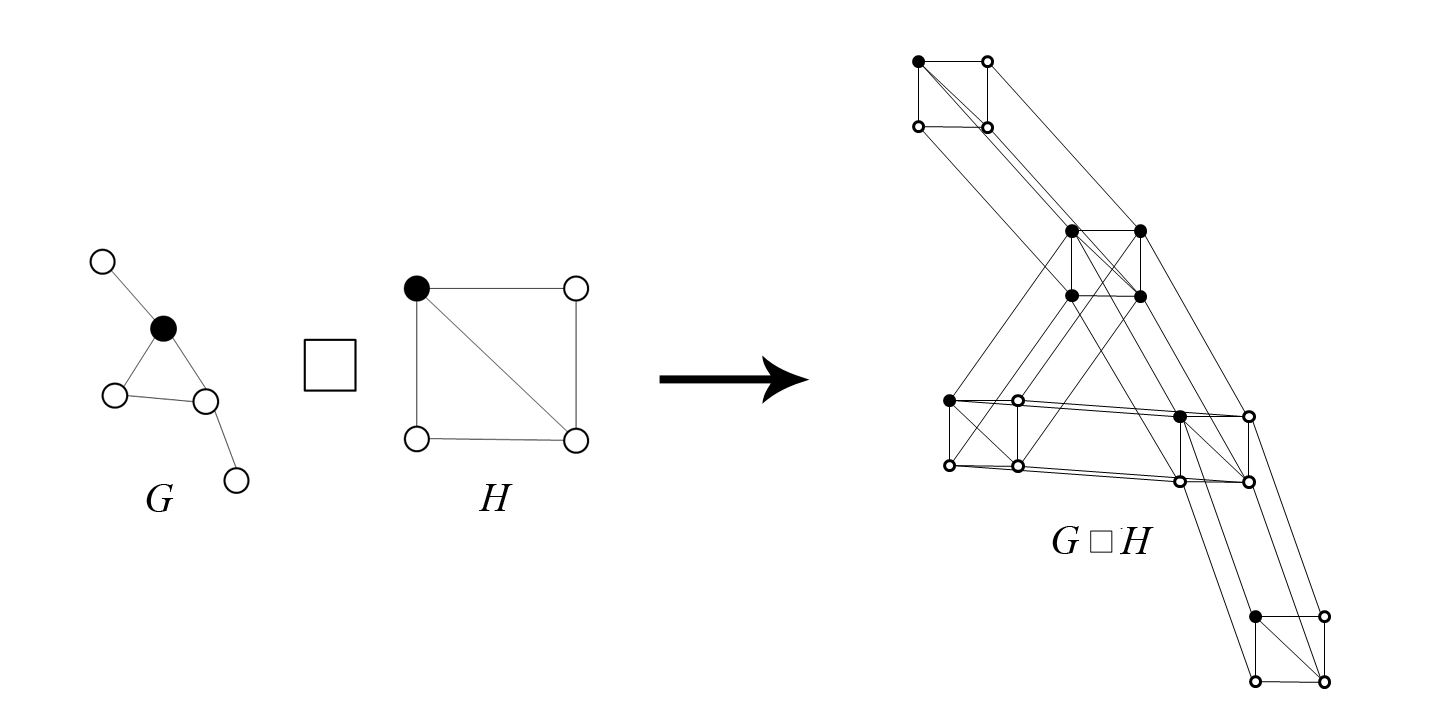}
\end{center}
\caption{Graphs $G$, $H$,  $G\square H$ with dynamos $D_G$, $D_H$, $D$, resp. (dynamos are colored black), where $\left| D\right| = min_D(G) {\left|V(H)\right|}+ min_D(H) {\left|V(G)\right|}- min_D(G)min_D(H)$ as constructed in the proof of Theorem~\ref{thm:Cartesian}.}
\label{fig:Cartesianproduct}
\end{figure}

Initially color $D$. We will show that all vertices in $G \square H$ will eventually be colored by proving that each $H(t)$ for $t=0, 1, \ldots, T$ will eventually become colored. We proceed by induction on $t$. As the base case, note that every vertex in $H(0)$ has its first coordinate in $D_{G}$ and consequently belongs to $D$ which is assumed to be initially colored. Now assume that all of the vertices in $H(0), H(1), \ldots, H(t-1)$ have been colored and let us show that the vertices in $H(t)$ will eventually become colored. Consider an arbitrary uncolored vertex $(u,v)$ in $H(t)$. Note that $(u,v)$ has $deg_{(G\square H)-H_u}(u,v)=deg_G(u)$ neighbors not in $H_u$ and $deg_{ H_u}(u,v)=deg_H(v)$ neighbors in $H_u$ and therefore $deg_{(G\square H)}(u,v)=deg_G(u)+deg_H(v)$. At least half of the neighbors of $(u,v)$ which are not in $H_{u}$ must be in the union of $H(i), i=0,1,\ldots , t-1$ which contains only colored vertices. Moreover, the set $(u,v')$ with $v'$ in $D_{H}$ is a dynamo of the subgraph $H_{u}$ so half of the neighbors of $(u,v)$ in this subgraph will eventually get colored similarly and at least as fast as $H$ is colored by $D_{H}$. Combining these colorations of the neighbors of $(u,v)$ in $H_{u}$ and out of $H_{u}$ we can conclude that half of all the neighbors of $(u,v)$ will eventually be colored, and consequently $(u,v)$ will become colored. Thus every vertex in $H(t)$ will be eventually colored. It now follows by induction that $D$ is a dynamo of $G \square H$  providing the desired upper bound of $min_D(G\square H)$.
\end{proof}

\begin{figure}[ht]
\centerline{\mbox{\includegraphics[width=69mm]{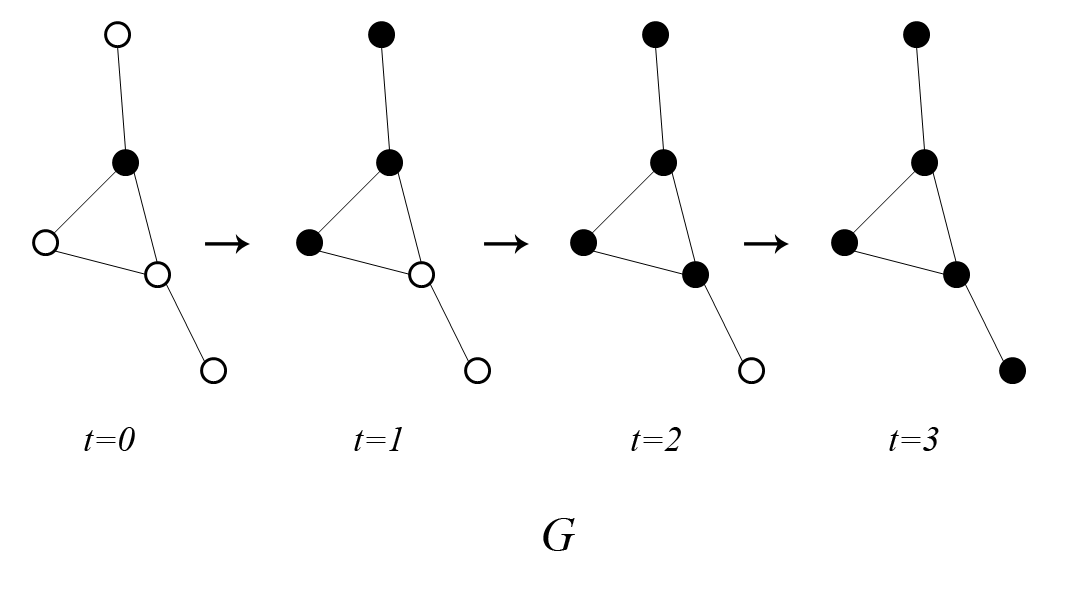}}
\hspace{10mm}
\mbox{\includegraphics[width=51mm]{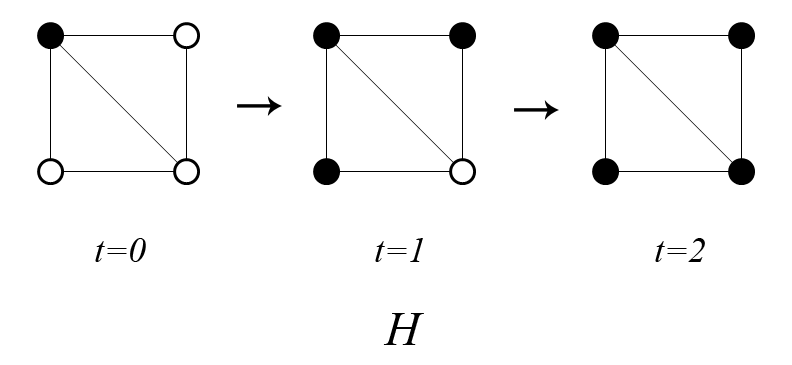}}}
\includegraphics[width=150mm]{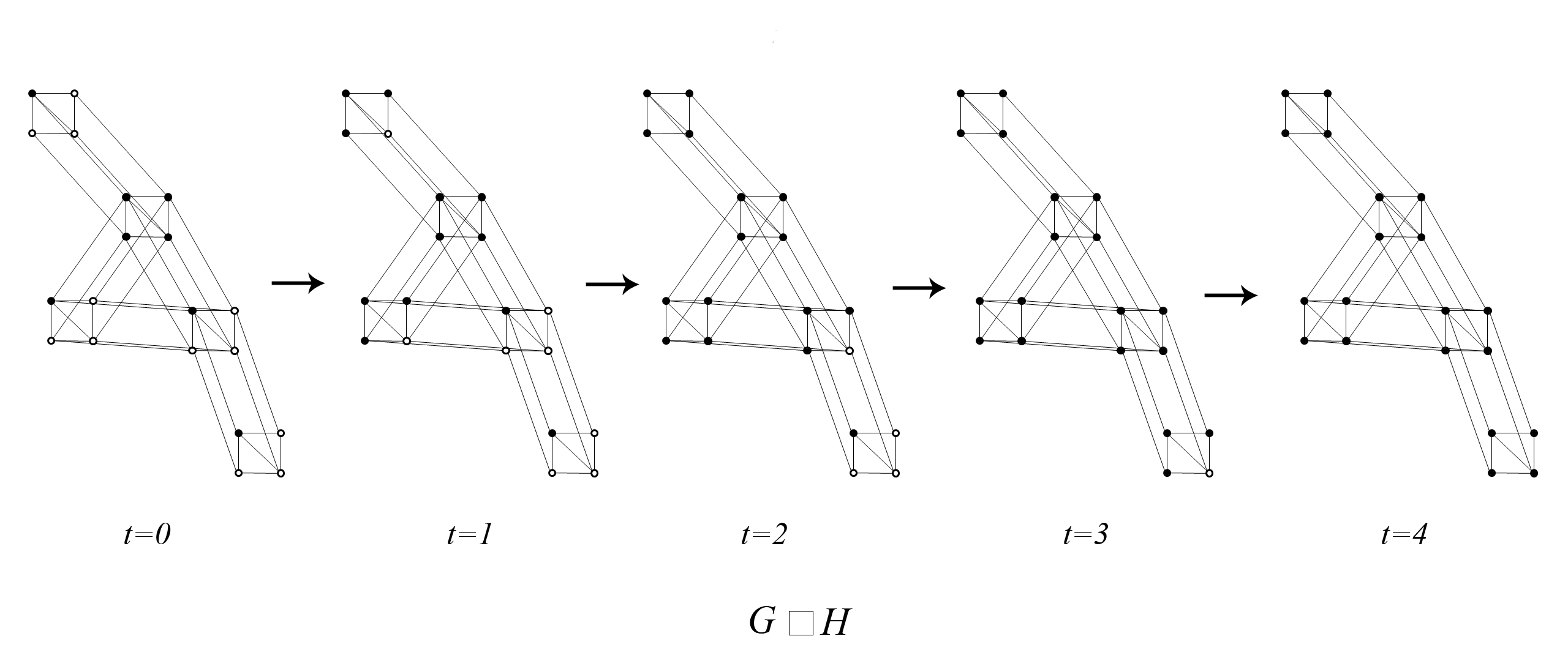}
\caption{Individual colorations of graphs $G$, $H$, $G\square H$ of Fig.~\ref{fig:Cartesianproduct} by dynamos $D_G$, $D_H$, $D$, resp.}
\label{fig:Gsteps}
\end{figure}

\begin{figure}
\begin{center}
\includegraphics[width=80mm]{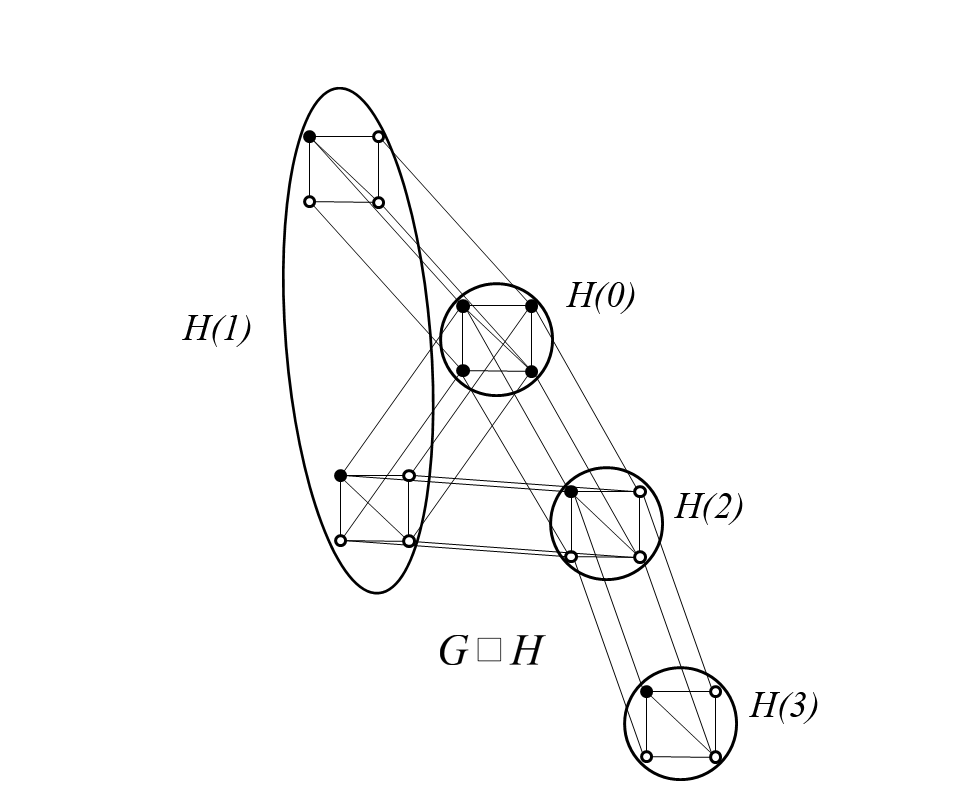}
\end{center}
\label{fig:subgraphs}
\caption{$V(G\square H)$ of Fig.~\ref{fig:Cartesianproduct} partitioned into $H(t)$, $t=0, 1, \ldots, T$ as in the proof of Theorem~\ref{thm:Cartesian}.}
\label{fig:Cartesian3}
\end{figure}

There exist graphs $G$ and $H$ so that the upper bound of $min_D(G\square H)$ provided in Theorem~\ref{thm:Cartesian} is tight. For example, if $G$ consists of $n$ isolated vertices and $H$ consists of $m$ isolated vertices, then $G\square H$ consists of $nm$ isolated vertices and the equality for the bound in Theorem~\ref{thm:Cartesian} will follow. If we require that $G$ and $H$ do not contain isolated vertices, Theorem~\ref{thm:Cartesian2} below offers a smaller upper bound of $min_D(G\square H)$. We must first prove Lemma~\ref{lem:comp} and Corollary~\ref{clry:half}.

\begin{lem}
\label{lem:comp}
Let $G$ be a graph without isolated vertices, and let $D$ be a minimal dynamo of $G$ (meaning that removing any vertex from $D$ results in a set that is no longer a dynamo). Then the set of vertices $V(G)-D$ is  a dynamo of $G$. In addition, the dynamo $V(G)-D$ colors all the vertices in $G$ by time step $t=1$.
\end{lem}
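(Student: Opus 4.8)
The plan is to prove the stronger ``time step $t=1$'' claim directly, since it immediately yields that $V(G)-D$ is a dynamo. When $V(G)-D$ is initially colored, the only white vertices are those of $D$, so it suffices to show that every $u\in D$ has at least half of its neighbors in $V(G)-D$; each such $u$ then turns black at $t=1$ and the whole graph is colored at once. Writing $d=\deg_G(u)$, this is the assertion $\deg_{V(G)-D}(u)\geq\lceil d/2\rceil$, equivalently $\deg_D(u)\leq\lfloor d/2\rfloor$. Thus the entire lemma reduces to a single degree inequality: in a minimal dynamo, strictly fewer than half of the neighbors of each of its vertices lie inside the dynamo.

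First I would establish this degree inequality from minimality. Fix $u\in D$ and suppose for contradiction that $\deg_D(u)\geq\lceil d/2\rceil$, i.e. at least half of $u$'s neighbors lie in $D$. Since $u$ is not its own neighbor, all of these neighbors lie in $D-\{u\}$, so initially coloring $D-\{u\}$ already gives $u$ the majority threshold, and $u$ becomes black at $t=1$. Hence the black set produced by $D-\{u\}$ at time $1$ contains all of $D$. I would then invoke monotonicity of the majority process --- the one-step update $X\mapsto X\cup\{w:\deg_X(w)\geq\lceil\deg_G(w)/2\rceil\}$ is an inclusion-preserving, time-independent map --- to conclude that from time $1$ onward the coloration started from $D-\{u\}$ dominates the coloration started from $D$ shifted back by one step. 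Since $D$ colors all of $G$, so does $D-\{u\}$, contradicting minimality of $D$. Therefore $\deg_D(u)\leq\lceil d/2\rceil-1$ for every $u\in D$.

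Finally I would convert this back: $\deg_{V(G)-D}(u)=d-\deg_D(u)\geq d-\lceil d/2\rceil+1=\lfloor d/2\rfloor+1\geq\lceil d/2\rceil$, so each $u\in D$ indeed has at least half of its neighbors outside $D$ and is colored at $t=1$. The no-isolated-vertices hypothesis gives $d\geq 1$, which makes this count positive and in particular forces $V(G)-D\neq\emptyset$, so the complement is a genuine nonempty dynamo. I expect the main obstacle to be the monotonicity step: one must state carefully that the majority update is both monotone and time-homogeneous, so that recovering all of $D$ a single step late suffices to finish the coloration, and one must carry the ``at least half'' threshold with the correct floor/ceiling bookkeeping rather than treating it as an exact halving.
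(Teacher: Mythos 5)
Your proof is correct and follows essentially the same route as the paper's: both use minimality to show that every vertex $u$ of $D$ has strictly fewer than half of its neighbors inside $D$ (otherwise $D-\{u\}$ would still be a dynamo, since $u$ would be recolored in one step), and then conclude that the complement $V(G)-D$ colors all of $D$, hence all of $G$, at time $t=1$. Your version merely makes explicit the monotonicity/time-homogeneity of the majority update and the floor/ceiling bookkeeping that the paper's proof leaves implicit.
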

\begin{proof}
Initially color $D$. If a vertex $u$ in $D$ has $deg_D(u)\geq\dfrac{1}{2}deg_G(u)>0$ (recall that $G$ does not have isolated vertices), then $D-u$ would be a dynamo of $G$ because if all the vertices in $D-u$ were initially black and $u$ was initially white then its $deg_D(u)$ black neighbors in $D-u$ would cause $u$ to become black in the next time step. This would contradict the minimality of the dynamo $D$. Hence, $deg_D(u)<\dfrac{1}{2}deg_G(u)$ for all vertices $u$ in $D$. Note that this implies $D\neq V(G)$ since otherwise $deg_G(u)=deg_D(u)<\dfrac{1}{2}deg_G(u)$ for any $u$ in $D$, a contradiction.

Now, initially color $V(G)-D \neq \emptyset$ . Previously, we have shown that for each vertex $u$ in $D$, $deg_D(u)<\dfrac{1}{2}deg_G(u)$ which is equivalent to $deg_{V(G)-D}(u)>\dfrac{1}{2}deg_G(u)$, and therefore $u$ will be colored in the next time step. Thus, $V(G)-D$ is a dynamo of $G$ and all the vertices in $G$ become colored by time step $t=1$.
\end{proof}

\begin{clry}
\label{clry:half}
Let $G$ be a graph without isolated vertices.  Then, $min_{D}(G)\leq\dfrac{\left|V(G)\right|}{2}.$
\end{clry}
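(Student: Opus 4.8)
The plan is to leverage Lemma~\ref{lem:comp} directly by applying it to a dynamo of smallest possible size. First I would let $D$ be a minimum dynamo of $G$, so that $|D| = min_D(G)$. The crucial observation is that a \emph{minimum} dynamo is automatically a \emph{minimal} dynamo in the sense required by Lemma~\ref{lem:comp}: if removing some vertex from $D$ still left a dynamo, that smaller set would be a dynamo with fewer than $min_D(G)$ vertices, contradicting the minimality of $|D|$. Hence $D$ satisfies the hypothesis of the lemma.

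Next I would invoke Lemma~\ref{lem:comp}, which guarantees that the complementary set $V(G)-D$ is itself a dynamo of $G$ (here the hypothesis that $G$ has no isolated vertices is exactly what the lemma requires, and it is inherited from the statement of the corollary). Since $V(G)-D$ is a dynamo, its cardinality is an upper bound for the size of a minimum dynamo, giving
\[
min_D(G) \leq |V(G)-D| = |V(G)| - |D| = |V(G)| - min_D(G).
\]
Rearranging this inequality yields $2\,min_D(G) \leq |V(G)|$, and dividing by $2$ produces the desired bound $min_D(G) \leq |V(G)|/2$.

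There is no genuine obstacle here: the corollary is essentially an immediate counting consequence of Lemma~\ref{lem:comp}. The only point requiring a moment's care is the reduction step, namely verifying that a cardinality-minimum dynamo qualifies as a minimal dynamo so that the lemma applies; once that bridge is in place, the conclusion follows from comparing $|D|$ with $|V(G)-D|$ and solving the resulting linear inequality.
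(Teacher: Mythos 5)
Your proof is correct and follows essentially the same route as the paper's: both observe that a minimum dynamo is minimal, apply Lemma~\ref{lem:comp} to conclude that $V(G)-D$ is a dynamo, and compare cardinalities. The only difference is cosmetic---you rearrange the inequality $min_D(G) \leq |V(G)| - min_D(G)$ directly, while the paper phrases the same size comparison as a proof by contradiction.
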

\begin{proof}
Let $D$ be a   minimum (hence minimal) dynamo of $G$. Suppose for contradiction that $min_D(G)=\left|D\right|>\dfrac{\left|V(G)\right|}{2}$. By Lemma \ref{lem:comp}, $V(G)-D \neq \emptyset$ is a dynamo of $G$ and $\left|V(G)-D\right| = \left|V(G)\right| - \left| D\right| <\left|V(G)\right| - \dfrac{\left|V(G)\right|}{2}=\dfrac{\left|V(G)\right|}{2}<\left|D\right| = min_D(G)$, contradicting the minimality of the dynamo $D$. Therefore, $min_{D}(G)\leq\dfrac{\left|V(G)\right|}{2}.$
\end{proof}

We note that the previous corollary was independently obtained by Chang and Lyuu~\cite{products:lyuu}, and also by Ackerman, Ben-Zwi and Wolfovitz~\cite{products:ack}, however our derivation is much simpler.

We are now prepared to present Theorem~\ref{thm:Cartesian2} which provides a smaller upper bound for $min_{D}(G \square H)$ than Theorem~\ref{thm:Cartesian} when  $G$ and $H$ do not contain isolated vertices.

\begin{thm}
Let $G$ and $H$ be two graphs without isolated vertices. Then,
\[  min_{D}(G \square H) \leq min_{D}(G)\left|V(H)\right| + min_{D}(H)\left|V(G)\right| - 2min_{D}(G)min_{D}(H).\]
\label{thm:Cartesian2}
\end{thm}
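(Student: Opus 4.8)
\noindent The plan is to reuse the construction philosophy of Theorem~\ref{thm:Cartesian}, but to lean on Lemma~\ref{lem:comp} in order to leave an entire block of vertices uncolored. I would start by choosing $D_G$ and $D_H$ to be \emph{minimum} (hence minimal) dynamos of $G$ and $H$. Because neither graph has isolated vertices, Lemma~\ref{lem:comp} then supplies two facts I will use repeatedly: first, $V(G)-D_G$ and $V(H)-D_H$ are themselves dynamos; and second, the stronger pointwise estimate established inside that lemma, namely $deg_{V(G)-D_G}(u)>\frac12 deg_G(u)$ for every $u$ in $D_G$ (and symmetrically for $H$).

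For the dynamo itself I would take the \emph{symmetric-difference} pattern $D=\{(u,v): u\in D_G \text{ or } v\in D_H, \text{ but not both}\}$, consisting of the two off-diagonal blocks $B=D_G\times(V(H)-D_H)$ and $C=(V(G)-D_G)\times D_H$, which are colored initially. A one-line count of $|B|+|C|$ gives precisely $min_D(G)|V(H)|+min_D(H)|V(G)|-2min_D(G)min_D(H)$, so everything reduces to showing $D$ is a dynamo. I would organize the remaining two blocks as $A=D_G\times D_H$ and $Z=(V(G)-D_G)\times(V(H)-D_H)$.

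First I would dispatch $A$ in a single step. A vertex $(u,v)\in A$ sees among its column-neighbors exactly the $deg_{V(G)-D_G}(u)$ vertices of $C$ and among its row-neighbors exactly the $deg_{V(H)-D_H}(v)$ vertices of $B$, all black at $t=0$; by the two strict inequalities from Lemma~\ref{lem:comp} these counts exceed $\frac12 deg_G(u)$ and $\frac12 deg_H(v)$, hence together exceed half of $deg_{G\square H}(u,v)=deg_G(u)+deg_H(v)$, so $(u,v)$ is colored at $t=1$. After this step everything except $Z$ is black. To color $Z$ I would run a product-time induction in the spirit of Theorem~\ref{thm:Cartesian}: writing $a(u)$ and $b(v)$ for the times at which $u$ and $v$ become black when $G$ is colored by $D_G$ and $H$ by $D_H$, I would prove by strong induction on $a(u)+b(v)$ that every $(u,v)\in Z$ is colored by time $a(u)+b(v)$. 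In the inductive step, at least half of $u$'s $G$-neighbors $u'$ have $a(u')\le a(u)-1$, and each resulting column-neighbor $(u',v)$ lies either in the permanently black block $B$ or again in $Z$ with a strictly smaller product-time; so at least half of the column-neighbors of $(u,v)$ are black by time $a(u)+b(v)-1$. The symmetric computation over $v$'s $H$-neighbors, with $C$ as the always-black reservoir, colors at least half of the row-neighbors by the same time, and adding the two half-shares gives at least half of all neighbors, forcing $(u,v)$ black by time $a(u)+b(v)$.

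The crux --- and the reason I would isolate the block $Z$ rather than attempt a single global induction --- is that the majority threshold is taken over the \emph{total} degree $deg_G(u)+deg_H(v)$, so neither a within-row nor a within-column dynamo process ever colors a product vertex on its own; one must always pair a half-share of the row-neighbors with a half-share of the column-neighbors. What makes the $Z$-induction close is that every vertex of $Z$ has both coordinates outside the dynamos, so the governing row-process is uniformly the $D_H$-process and the column-process is uniformly the $D_G$-process, and every too-early neighbor of the ``wrong'' type lands in $B$ or $C$. A global induction over all four blocks would instead break down, because crossing the $D_G$/$D_H$ boundary swaps which dynamo governs a given row or column and can raise, rather than lower, a neighbor's coloring time. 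I expect the bookkeeping of which neighbors fall into $B$ or $C$, together with the half-plus-half combination, to be the only genuinely delicate points; the vertex count and the immediate coloring of $A$ are routine.
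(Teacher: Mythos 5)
Your proof is correct, and its first two steps coincide exactly with the paper's: the paper takes the same initial set (your $B\cup C$ is its $D-D'$, your $A$ is its $D'$), performs the same count, and colors $D'$ in one time step using precisely the fact from Lemma~\ref{lem:comp} that each $u\in D_G$ has more than half of its $G$-neighbors in $V(G)-D_G$ (and symmetrically for $H$). Where you genuinely diverge is the endgame. The paper stops after coloring $D'$: once $D'$ is black, the entire set $D=(D-D')\cup D'$ of Theorem~\ref{thm:Cartesian} is black, and that theorem has already certified that $D$ is a dynamo of $G\square H$, so nothing more needs to be proved --- your whole $Z$-induction is logically unnecessary from this point of view. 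What your longer route buys is self-containedness and rigor: your strong induction on the product time $a(u)+b(v)$, with the blocks $B$ and $C$ serving as permanently black reservoirs for the ``wrong-type'' neighbors, pins down exactly when each vertex of $Z$ is colored and why a half-share of column-neighbors and a half-share of row-neighbors are available simultaneously. This makes precise the one step that is arguably informal in the paper's proof of Theorem~\ref{thm:Cartesian} (the ``combining these colorations'' of neighbors inside and outside $H_u$), so your argument could stand even if one distrusted that proof. Two minor points: you quote the strict inequality $deg_{V(G)-D_G}(u)>\frac{1}{2}deg_G(u)$ ``from inside'' Lemma~\ref{lem:comp}, whereas the paper re-derives it from the lemma's statement alone (if $V(G)-D_G$ colors $G$ by $t=1$, each $u\in D_G$ must have at least half of its neighbors in $V(G)-D_G$), which avoids citing the internals of another proof; and you never note that $B\cup C\neq\emptyset$ (the paper checks this via Corollary~\ref{clry:half}), though this is harmless since your induction directly shows every vertex gets colored.
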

\begin{proof}
Let $D_{G}$ and $D_{H}$ be two minimum dynamos of $G$ and $H$, respectively, and let $D$ be the dynamo of $G\square H$ constructed in the proof of Theorem~\ref{thm:Cartesian} with $min_D(G) {\left|V(H)\right|}$+$min_D(H) {\left|V(G)\right|}-min_D(G)min_D(H)$ vertices. Let $D'$ be the set of $min_D(G)min_D(H)$ vertices $(u,v)$ in $D$ where $u$ and $v$ are in $D_{G}$ and $D_{H}$, respectively.  Using Corollary~\ref{clry:half} for $G$ and $H$, we have that
\begin{eqnarray*}
\left|D-D'\right| &=& min_D(G) {\left|V(H)\right|}+min_D(H) {\left|V(G)\right|}-2min_D(G)min_D(H) \\
&\geq& min_D(G)(2min_D(H))+min_D(H)(2min_D(G))-2min_D(G)min_D(H) \\
& =&  2 min_D(G)min_D(H) > 0,
\end{eqnarray*}
and therefore, $D-D'\neq\emptyset$. To verify the proposed bound, we will show that $D-D'$ is a dynamo of $G\square H$. Initially color $D-D'$. It then suffices to show that the vertices in $D'$ will eventually get colored since $D$ is a dynamo of $G \square H$.

Let $(u,v)$ be an arbitrary vertex in $D'$. By the definition of $D'$, we have that $u$ and $v$ are in $D_{G}$ and $D_{H}$, respectively. Note that $(u,v)$ belongs to $H_{u}$ and $G_{v}$ (recall Definition~\ref{defn:Cartesian}) and the vertices in the neighborhood of $(u,v)$ must be in exactly one of these two subgraphs, so $deg_{(G\square H)}(u,v)=deg_G(u)+deg_H(v)$.  From Lemma~\ref{lem:comp}, we have that $V(G)-D_G$ (resp., $V(H)-D_H$) is a dynamo of $G$ (resp.,  $H$)  which colors all the vertices in $G$ (resp., $H$) by time step $t=1$. This means that the vertex $u$ (resp. $v$) has at least half of its neighbors in $V(G)-D_G$ (resp., $V(H)-D_H$). Therefore the vertex $(u,v)$ has at least half of its neighbors in $D-D'$ which was initially colored and we can conclude that $(u,v)$ will become colored in the next time step. This shows that $D-D'$ is also a dynamo of $G\square H$ as desired.
\end{proof}

\section{ Tight upper bounds of $min_k(G\times H)$ and $min_D(G\times H)$  of the tensor product $G\times H$ } \label{tensor}
Recall the definition of the tensor product of two graphs.
\begin{defn}
The  \emph{tensor product} of two disjoint graphs $G$ and $H$ is the graph $G\times H$  such that $V(G\times H)=V(G)\times V(H)$ and two vertices $(u,u')$ and $(v,v')$ are adjacent if and only if $u$ is adjacent to $v$ in $G$ and $u'$ is adjacent to $v'$ in $H$.
\end{defn}
Note that each of the $i_G$ (resp., $i_H$) isolated vertices of $G$ (resp., $H$) generates $|V(H)|$ (resp., $|V(G)|$) isolated vertices in $G\times H$, and these are the only isolated vertices in $G\times H$. Thus, $G\times H$ contains $i_G|V(H)| + i_H|V(G)| - i_G i_H$ isolated vertices which must be all contained in any $k$-conversion set and dynamo of $G\times H$. We can then focus on determining upper bounds of $min_k(G\times H)$ and $min_D(G\times H)$ when $G$ and $H$ are graphs without isolated vertices and later add the number of potential isolated vertices to these bounds. We will first provide an upper bound of $min_k(G\times H)$.

\begin{thm}
Let $G$ and $H$ be two graphs without isolated vertices. Then,
\[ min_k(G \times H) \leq min\{ min_k(G)\left|V(H)\right|, min_k(H)\left|V(G)\right|\}.\]
\label{thm:ktensor}
\end{thm}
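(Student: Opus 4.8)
The plan is to establish the two halves of the minimum separately and then invoke the symmetry $G\times H\cong H\times G$ of the tensor product: it suffices to exhibit a $k$-conversion set of $G\times H$ with $min_k(G)|V(H)|$ vertices, since swapping the roles of $G$ and $H$ immediately yields one with $min_k(H)|V(G)|$ vertices, and taking the smaller of the two gives the claimed bound.

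First I would fix a minimum $k$-conversion set $S_G$ of $G$ and take $S=S_G\times V(H)$, the set of all $(u,u')$ with $u\in S_G$; this has exactly $min_k(G)|V(H)|$ vertices, so it remains only to show that $S$ colors $G\times H$. The natural bookkeeping is to run the $G$-process in parallel: let $t_u$ denote the time step at which the vertex $u$ becomes colored in $G$ when $S_G$ is initially colored, so that $t_u=0$ for $u\in S_G$ and every $u$ has a finite $t_u$ because $S_G$ is a $k$-conversion set. I would then prove, by induction on $t$, that every vertex $(u,u')$ of $G\times H$ with $t_u\leq t$ is colored by time step $t$ when $S$ is initially colored.

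The base case $t=0$ is immediate, since those vertices are precisely the members of $S$. For the inductive step, consider $(u,u')$ with $t_u=t\geq 1$. By definition of the $G$-process, $u$ has a set $N$ of at least $k$ neighbors in $G$ that are colored by time $t-1$, that is, with $t_v\leq t-1$ for each $v\in N$. Here is the one place the hypothesis is used: since $H$ has no isolated vertices, $u'$ has at least one neighbor $w'$ in $H$. Then for every $v\in N$ the vertex $(v,w')$ is a neighbor of $(u,u')$ in $G\times H$ (because $v$ is adjacent to $u$ and $w'$ is adjacent to $u'$), and by the inductive hypothesis each such $(v,w')$ is already colored by time $t-1$. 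These $|N|\geq k$ vertices are distinct, as they share the fixed second coordinate $w'$ and have distinct first coordinates, so $(u,u')$ has at least $k$ colored neighbors at time $t-1$ and is therefore colored by time $t$. Because every vertex of $G$ has a finite $t_u$, the induction shows that all of $G\times H$ is eventually colored, so $S$ is a $k$-conversion set of the required size.

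The subtle point, and the reason the statement is not entirely routine, is that in the tensor product each ``copy'' $\{(u,v'):u\in V(G)\}$ at a fixed level $v'$ is an \emph{independent} set; one therefore cannot simply color a single copy of $G$ and let the coloration spread within it, as one would for the Cartesian product. The inductive argument circumvents this by lifting the $k$ colored $G$-neighbors of $u$ through a single common $H$-neighbor $w'$ of $u'$, which is exactly what the no-isolated-vertex assumption on $H$ supplies. Securing that common neighbor $w'$ is the main (and essentially the only) obstacle in the argument.
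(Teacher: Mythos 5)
Your proof is correct and follows essentially the same route as the paper's: the same set $S=S_G\times V(H)$, the same induction on the time step at which the first coordinate is colored in $G$, and the same use of the no-isolated-vertices hypothesis on $H$ to lift the $k$ colored $G$-neighbors into $G\times H$. The only cosmetic difference is that you route all $k$ lifted neighbors through one common $H$-neighbor $w'$ of $u'$, while the paper counts the $deg_H(v)\geq 1$ lifted neighbors produced by each black $G$-neighbor; both yield the required $k$ distinct colored neighbors.
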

\begin{proof}
We may assume without loss of generality that $min_{k}(G)\left|V(H)\right|\leq min_{k}(H)\left|V(G)\right|$ because the tensor product is commutative.  Let $S_{G}$ be a minimum $k$-conversion set of $G$, and let $S$ be the set of $min_{k}(G)\left|V(H)\right|$ vertices $(u,v)$ in $G\times H$ where $u$ is in $S_{G}$.  To verify the proposed upper bound, it suffices to show that $S$ is a $k$-conversion set of $G\times H$.

We will first partition the vertices in $G \times H$ based on the coloration of $G$ by $S_G$. Suppose it takes $T$ time steps to fully color $G$ by $S_G$ and for each $t=0, 1, \ldots, T$, let $H(t)$ be the set of vertices $(u,v)$ where $u$ is a vertex that becomes colored at time step $t$ in $G$. The sets $H(0), H(1), \ldots, H(T)$ partition the vertices in $G\times H$.

Initially color $S$. We will show that all vertices in $G \times H$ will eventually be colored by proving that each $H(t)$ for $t=0, 1, \ldots, T$ will eventually be colored. We proceed by induction on $t$. As the base case, note that every vertex in $H(0)$ has its first coordinate in $S_{G}$ and consequently belongs to $S$ which is assumed to be initially colored. Now suppose that all of the vertices in $H(0), H(1), \ldots, H(t-1)$ have been colored and let us show that the vertices in $H(t)$ will become colored. Consider an arbitrary uncolored vertex $(u,v)$ in $H(t)$.  Note that $deg_{(G\times H)}(u,v)=deg_G(u)deg_H(v)$. Since $u$ is colored in time step $t$ in $G$ by $S_G$, at least $k$ of its $deg_G(u)$ neighbors in $G$ were black by time step $t-1$. Each one of these black neighbors in $G$ will generate $deg_H(v) > 0$ (recall that $H$ does not have isolated vertices) neighbors of $(u,v)$ in $G\times H$ in the union of $H(i)$, $i=0, 1, \ldots, t-1$ which contains only colored vertices. Therefore, at least $k$ of the $deg_G(u)deg_H(v)$ neighbors of $(u,v)$ in $G\times H$ are colored, and hence, $(u,v)$ will become colored. It now follows by induction that $S$ is a $k$-conversion set of $G \times H$ providing the desired upper bound of $min_k(G\times H)$.
\end{proof}

If in the proof of Theorem~\ref{thm:ktensor} we replace each occurrence of $min_k$, $S_G$, $S$, "$k$-conversion set", and "at least $k$" with, respectively, $min_D$, $D_G$, $D$, "dynamo," and "at least half," we obtain a proof for the analogous result under a majority conversion process stated in Theorem~\ref{thm:tensor} below. We omit the details in this proof but provide a concrete example in Fig.~\ref{fig:tensor1} to illustrate the construction of such dynamo $D$ of $G \times H$ where $V(G\times H)$ is partitioned into $H(t)$, $t=0, 1, \ldots, T$.

\begin{thm}
Let $G$ and $H$ be two graphs without isolated vertices. Then,
 \[ min_{D}(G\times H)\leq min\left\{min_{D}(G)\left|V(H)\right|,min_{D}(H)\left|V(G)\right|\right\}.\]
\label{thm:tensor}
\end{thm}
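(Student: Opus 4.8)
The plan is to mimic the proof of Theorem~\ref{thm:ktensor} almost verbatim, making exactly the substitutions the authors indicate, and then to check that the single quantitative step survives the passage from ``at least $k$'' to ``at least half.'' First I would invoke the commutativity of the tensor product to assume without loss of generality that $min_D(G)\left|V(H)\right| \leq min_D(H)\left|V(G)\right|$, so that it suffices to exhibit a dynamo of $G\times H$ with $min_D(G)\left|V(H)\right|$ vertices. I would let $D_G$ be a minimum dynamo of $G$ and set $D$ to be the collection of vertices $(u,v)$ in $G\times H$ with $u$ in $D_G$; this set has exactly $min_D(G)\left|V(H)\right|$ vertices, and it remains to show that $D$ is a dynamo.

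Next I would reuse the partition from the earlier proof. Letting $T$ be the number of time steps needed for $D_G$ to color $G$, I define $H(t)$ to be the set of vertices $(u,v)$ whose first coordinate $u$ becomes colored at time step $t$ in $G$; the sets $H(0), H(1), \ldots, H(T)$ then partition $V(G\times H)$. The argument proceeds by induction on $t$ after initially coloring $D$. The base case is immediate, since every vertex of $H(0)$ has its first coordinate in $D_G$ and hence already lies in the initially colored set $D$.

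The inductive step is the only place where a genuine verification occurs, and it is the one I expect to be the crux, albeit a mild one. Assuming all of $H(0), \ldots, H(t-1)$ are colored, I take an arbitrary uncolored $(u,v)$ in $H(t)$. Since $deg_{(G\times H)}(u,v)=deg_G(u)\,deg_H(v)$, I must show that at least half of these $deg_G(u)\,deg_H(v)$ neighbors are colored. Because $u$ is colored at step $t$ by $D_G$ under the majority process on $G$, at least half of its $deg_G(u)$ neighbors in $G$ were black by step $t-1$; each such neighbor $u'$ generates $deg_H(v)>0$ neighbors $(u',v')$ of $(u,v)$ in $G\times H$, all of which lie in $\bigcup_{i<t}H(i)$ and are therefore colored by the inductive hypothesis. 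Multiplying the ``at least half of $deg_G(u)$'' count by the positive factor $deg_H(v)$ shows that at least half of the neighbors of $(u,v)$ are colored, so $(u,v)$ becomes colored. The hypothesis that $H$ has no isolated vertices is exactly what guarantees $deg_H(v)>0$, preventing the argument from degenerating. Induction then yields that $D$ colors all of $G\times H$, establishing the claimed upper bound.
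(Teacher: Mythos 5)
Your proposal is correct and is exactly the proof the paper intends: it carries out the substitution of $min_k$, $S_G$, $S$, ``$k$-conversion set,'' and ``at least $k$'' by $min_D$, $D_G$, $D$, ``dynamo,'' and ``at least half'' in the proof of Theorem~\ref{thm:ktensor}, and your verification that the ``at least half'' count survives multiplication by $deg_H(v)>0$ is precisely the detail the authors leave to the reader.
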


\begin{figure}
\includegraphics[width=80mm]{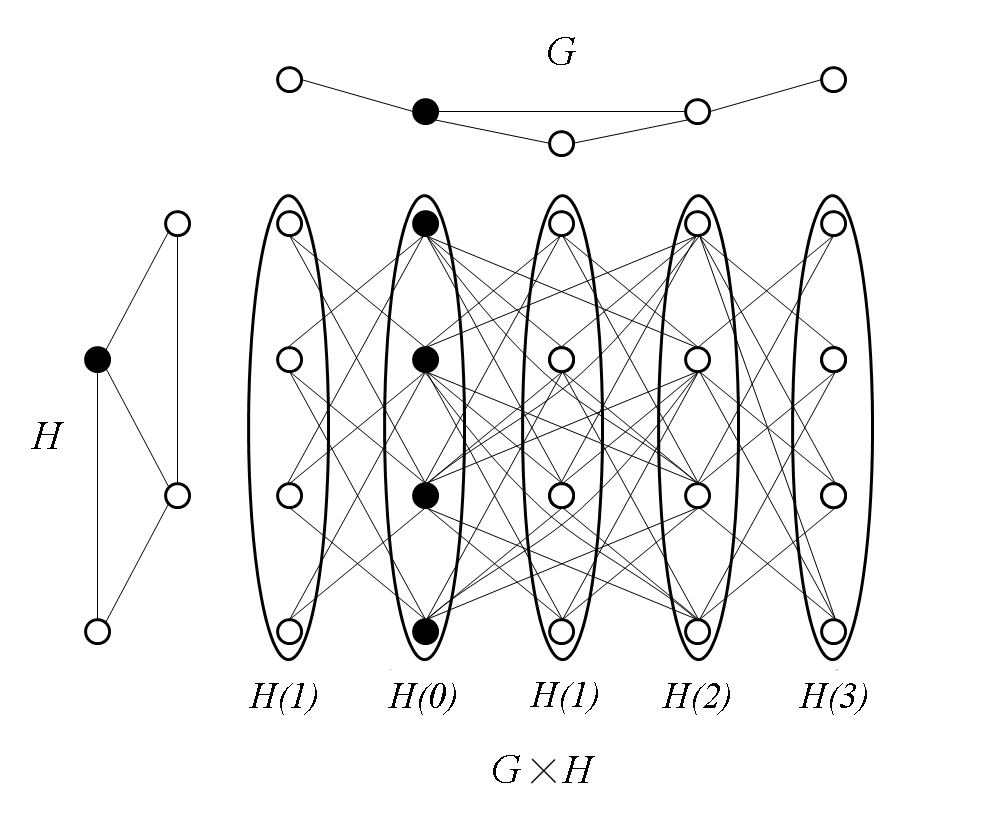}
\caption{Graphs $G$, $H$, $G\times H$ with minimum dynamos $D_G$, $D_H$, $D$, resp. (dynamos are colored black), where $\left| D\right| = min_D(G) {\left|V(H)\right|}$ for Theorem \ref{thm:tensor}; $V(G\times H)$ partitioned into $H(t)$, $t=0, 1, \ldots, T.$}
\label{fig:tensor1}
\end{figure}

We will close this section by providing an infinite family of graphs where the upper bounds in Theorems~\ref{thm:ktensor} and \ref{thm:tensor} are tight.  We will use the \emph{bipartite double cover} of a graph $G$ which is defined as the tensor product $G\times K_2$ where $K_2$ is the complete graph on two vertices. We state without proof the following lemma due to Sampathkumar~\cite{products:doublecover} which will be instrumental in our discussion.
\begin{lem} \rm{\cite{products:doublecover}}
The bipartite double cover of any connected bipartite graph $G$ is isomorphic to the graph $2G$, that is, two disjoint copies of $G$.
\label{lem:doublecover}
\end{lem}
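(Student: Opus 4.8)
The plan is to exploit the bipartite structure of $G$ to exhibit an explicit decomposition of the double cover $G\times K_2$ into two vertex-disjoint pieces, each isomorphic to $G$, with no edges running between them. First I would fix a bipartition $V(G)=A\cup B$ (a proper $2$-coloring, which exists since $G$ is bipartite) and label the two vertices of $K_2$ as $0$ and $1$, so that in $G\times K_2$ two vertices $(u,a)$ and $(v,b)$ are adjacent precisely when $u$ is adjacent to $v$ in $G$ and $a\neq b$.

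The central idea is a ``twisted'' partition of the vertex set. I would define
\[
C_1=\{(a,0):a\in A\}\cup\{(b,1):b\in B\},\qquad C_2=\{(a,1):a\in A\}\cup\{(b,0):b\in B\},
\]
which clearly partition $V(G\times K_2)$. The key computation is to track where the lifts of a single edge of $G$ land. Since $G$ is bipartite, every edge joins some $a\in A$ to some $b\in B$, and such an edge lifts to exactly the two edges $(a,0)(b,1)$ and $(a,1)(b,0)$ in the double cover; the first lies entirely inside $C_1$ and the second entirely inside $C_2$. Consequently no edge of $G\times K_2$ runs between $C_1$ and $C_2$, and a short case analysis (using that $A$ and $B$ are independent sets, so no edge has both endpoints in the same part with equal second coordinate) confirms that the only edges within $C_1$, respectively $C_2$, are exactly these lifts.

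I would then write down the isomorphisms explicitly. The map $\phi_1$ sending $a\mapsto(a,0)$ for $a\in A$ and $b\mapsto(b,1)$ for $b\in B$ is a bijection from $V(G)$ onto $C_1$, and by the edge computation above it carries each edge $ab$ of $G$ to the edge $(a,0)(b,1)$ while introducing no spurious edges; hence $\phi_1$ is a graph isomorphism from $G$ onto the subgraph of $G\times K_2$ induced by $C_1$. The symmetric map $\phi_2$ with $a\mapsto(a,1)$ and $b\mapsto(b,0)$ does the same for $C_2$. Combining these yields $G\times K_2\cong G\sqcup G=2G$.

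Finally, connectedness of $G$ is what guarantees that $C_1$ and $C_2$ are genuinely the two connected components of the double cover (each being isomorphic to the connected graph $G$), so that $G\times K_2$ is exactly two disjoint copies of $G$ rather than a finer decomposition. The main obstacle---really the only subtle point---is getting the twist right: one must use the bipartition to decide, for each vertex, which second coordinate places it in $C_1$ versus $C_2$, since it is precisely the alternation $A\mapsto 0,\ B\mapsto 1$ (and its complement) that makes the edge-lifts respect the partition. A naive ``untwisted'' split into $\{(u,0):u\in V(G)\}$ and $\{(u,1):u\in V(G)\}$ fails completely, because every lifted edge crosses between those two sets; thus the bipartite coloring is doing the essential work.
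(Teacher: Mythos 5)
Your proof is correct. Note, however, that the paper does not prove this lemma at all: it is stated explicitly ``without proof'' as a known result of Sampathkumar \cite{products:doublecover}, so there is no in-paper argument to compare yours against. What you have written is the standard, self-contained proof of that classical fact, and it is sound: the twisted partition $C_1, C_2$ built from the bipartition $A\cup B$ is exactly the right construction, your verification that every lifted edge $(a,0)(b,1)$ or $(a,1)(b,0)$ stays inside one part is complete, and the maps $\phi_1,\phi_2$ are genuine isomorphisms onto the induced subgraphs. One small observation: your closing claim that connectedness is needed to conclude $G\times K_2\cong 2G$ is not quite right --- the twisted-partition argument goes through verbatim for any bipartite $G$ once a proper $2$-coloring $A\cup B$ is fixed, so the isomorphism to $2G$ holds without connectedness. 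Connectedness only matters for sharper statements (e.g., that $C_1$ and $C_2$ are precisely the connected components of the double cover, or for the converse direction), so its appearance in the lemma's hypothesis is stronger than what the conclusion requires. This does not affect the validity of your proof; it just means one of your hypotheses is doing no work.
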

Lemma~\ref{lem:doublecover1}, below, easily follows from Lemma~\ref{lem:doublecover}.
\begin{lem}
Let $G$ be a connected bipartite graph. Then $min_{k}(G\times K_{2})=2min_{k}(G)$ and $min_{D}(G\times K_{2})=2min_{D}(G)$.
\label{lem:doublecover1}
\end{lem}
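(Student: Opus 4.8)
The plan is to reduce the entire statement to Lemma~\ref{lem:doublecover} and then exploit the fact that both conversion processes are purely local. First I would observe that, by definition, $G \times K_2$ is precisely the bipartite double cover of $G$; since $G$ is connected and bipartite, Lemma~\ref{lem:doublecover} then gives $G \times K_2 \cong 2G$, i.e., two vertex-disjoint copies of $G$. It therefore suffices to prove that $min_k(2G) = 2\,min_k(G)$ and $min_D(2G) = 2\,min_D(G)$, and the two cases can be handled by the same argument.

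The key structural point is that $2G$ contains no edges joining its two copies. Consequently, under either a $k$-threshold or a majority conversion process, the color that any vertex $u$ acquires depends only on vertices within its own copy: every neighbor of $u$ lies in the same copy as $u$, so the threshold (resp.\ majority) condition that would color $u$ references only initially black vertices of that copy. I would make this precise and conclude that the two copies evolve completely independently once an initial coloring is fixed.

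From this independence both inequalities follow immediately. For the lower bound, let $S$ be any $k$-conversion set (resp.\ dynamo) of $2G$, and let $S_1, S_2$ be its intersections with the first and second copies. Because the copies do not interact, $S$ colors all of $2G$ only if $S_1$ colors the first copy and $S_2$ colors the second; hence each $S_i$ is a $k$-conversion set (resp.\ dynamo) of a graph isomorphic to $G$, so $|S| = |S_1| + |S_2| \geq 2\,min_k(G)$ (resp.\ $2\,min_D(G)$). For the matching upper bound, I would place a minimum $k$-conversion set (resp.\ minimum dynamo) of $G$ in each copy; by independence this colors all of $2G$ and uses exactly $2\,min_k(G)$ (resp.\ $2\,min_D(G)$) vertices.

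There is no serious obstacle here. The only point requiring care is verifying that the two copies genuinely evolve independently, which is immediate from the absence of cross edges in the disjoint union $2G$; once this is stated, the lower and upper bounds match and the lemma follows. As noted, the dynamo case is entirely parallel to the $k$-conversion case, so I would present them together throughout rather than repeating the argument.
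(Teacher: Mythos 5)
Your proposal is correct and matches the paper's intended argument: the paper simply remarks that the lemma ``easily follows'' from Lemma~\ref{lem:doublecover}, and the content you supply --- the isomorphism $G\times K_2 \cong 2G$ together with the component-wise independence of both conversion processes on a disjoint union, giving matching lower and upper bounds --- is exactly the routine verification the paper leaves to the reader.
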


From Lemma~\ref{lem:doublecover1}, one can verify the equalities in Theorems~\ref{thm:ktensor}  and Theorem~\ref{thm:tensor} if  $G$ is any connected bipartite graph with at least two vertices and $H=K_2$. We leave the details to the reader.

\section{Conclusions and Future Work}\label{conclude}

In this work, we have provided exact values of $min_k(G)$ and $min_D(G)$ when $G$ is a complete multipartite graph. We have also found upper bounds of these graph invariants when $G$ is the Cartesian and tensor products of two graphs but these bounds are not always tight. A natural next step is to improve on these general upper bounds for the Cartesian and tensor products within specific families of graphs and ultimately to obtain exact values.  Our forthcoming work concerns the remaining open cases for the Cartesian and tensor products of two cycles, of two paths, and of a path and a cycle.

\section*{Acknowledgements}

The authors would like to thank Harold Jaffe and Paul Booth for their initial work on related problems and S. Luke Zinnen for his careful proof-reading.  The authors also thank the National Science Foundation (NSF) for its support thorough grant EMSW21-MCTP 0636528. Denise Sakai Troxell would also like to thank the Babson Faculty Research Fund for its support.


\begin{thebibliography}{10}

\bibitem{products:ack}
E.~Ackerman, O.~Ben-Zwi, and G.~Wolfovitz.
\newblock Combinatorial models and bounds for target selection.
\newblock {\em Theoretical Comp. Sci.}, 411 (2010) 4017--4022.

\bibitem{products:triangle}
S.S. Adams, P.~Booth, D.S. Troxell, and S.L. Zinnen.
\newblock Modeling the spread of fault in majority-based network systems:
  dynamic monopolies in triangular grids.
\newblock {\em Discrete Appl. Math., under review}.

\bibitem{products:constant}
E.~Berger.
\newblock Dynamic monopolies of constant size.
\newblock {\em J. of Comb. Th. Series B}, 83 (2001) 191--200.

\bibitem{products:lyuu}
C.L. Chang and Y.D. Lyuu.
\newblock Bounding the number of tolerable faults in majority-based systems.
\newblock {\em Lecture Notes in Computer Science}, 6078 (2010) 109--119.

\bibitem{products:brief}
M.C. Dourado, L.D. Penso, D.~Rautenbach, and J.~Szwarcfiter.
\newblock Brief announcement on reversible and irreversible conversions.
\newblock {\em Lecture Notes in Computer Science}, 6343 (2010) 395--397.

\bibitem{products:dreyerroberts}
P.A. Dreyer and F.S. Roberts.
\newblock Irreversible k-threshold processes: graph-theoretical threshold
  models of spread of disease and opinion.
\newblock {\em Discrete Appl. Math.}, 157 (2009) 1615--1627.

\bibitem{products:monotone}
P.~Flocchini, E.~Lodi, F.~Luccio, L.~Pagli, and N.~Santoro.
\newblock Monotone dynamos in tori.
\newblock {\em Proceedings of the 6th International Colloquium on Structural
  Information and Communication Complexity}, (1999) 152--165.

\bibitem{products:tori}
P.~Flocchini, E.~Lodi, F.~Luccio, L.~Pagli, and N.~Santoro.
\newblock Dynamic monopolies in tori.
\newblock {\em Discrete Appl. Math.}, 137 (2004) 197--212.

\bibitem{products:competition}
S.G. Hartke.
\newblock Graph theoretic models of spread and competition.
\newblock {\em Ph.D. Thesis, Rutgers University}, (2004).

\bibitem{products:np}
J.~Kyncl, B.~Lidicky, and T.~Vyskocil.
\newblock Irreversible 2-conversion set is {N}{P}-complete.
\newblock {\em KAM-DIMATIA Series}, 933 (2009) 1--12.

\bibitem{products:consensus}
N.~Mustafa and A.~Pekec.
\newblock Listen to your neighbors: how (not) to reach a consensus.
\newblock {\em SIAM Journal of Discrete Math.}, 17 (2004) 634--660.

\bibitem{products:majority}
N.~Mustafa and A.~Pekec.
\newblock Majority consensus and the local majority rule.
\newblock In F. Orejas, P.G. Spirakis and J. van Leeuwen (Eds.): {\em Lecture Notes in Computer Science}, 2076 (2001) 530-542.


\bibitem{products:voting}
D.~Peleg.
\newblock Local majority voting, small coalitions and controlling monopolies in
  graphs: a review.
\newblock {\em Proceedings of the 3rd International Colloquium on Structural
  Information and Communication Complexity}, (1996)  152--169.

\bibitem{products:bounds}
D.~Peleg.
\newblock Size bounds for dynamic monopolies.
\newblock {\em Discrete Appl. Math.}, 86 (1998) 263--273.

\bibitem{products:doublecover}
E.~Sampathkumar.
\newblock On tensor product graphs.
\newblock {\em J. Austr. Math. Soc. (Series A)}, 20 (1975) 268--273.

\end{thebibliography}

\end{document}